\newtheorem*{theoremnonum}{Theorem}
\newtheorem{theorem}{Theorem}[section]
  \newtheorem{definition}[theorem]{Definition}
\newtheorem{lemma}[theorem]{Lemma}
\newtheorem{cor}[theorem]{Corollary}
\theoremstyle{definition}
  \newtheorem*{remarks}{Remarks}
\newcommand\C{\mathbb{C}}
\newcommand\F{\mathbb{F}}
\newcommand\Z{\mathbb{Z}}
\newcommand\somedesign{D}
\newcommand\setsep{:}
\newcommand\defeq{:=}
  \newcommand{\ra}{\rightarrow}
  \newcommand{\divs}{\,|\,}
\newcommand\PSL{\mathrm{PSL}}
\newcommand\dzoharmd[2]{\dhp_{#1;#2}}
\newcommand\funddom[1]{\mathcal{D}}
\newcommand\hammingsphere{\Omega}
\newcommand\dualcode{\bot}
\newcommand\cutoff[1]{\mathrm{t}(#1)}
  \newcommand\cutoffonly{\mathrm{t}}
\newcommand\minLoz[1]{m_0}
\newcommand\minCoz[1]{w_0}
\newcommand\dhp{Q}
\newcommand\fixed[1]{\dot{#1}}
\newcommand\cfixed{\fixed{v}}
\newcommand\ccfixed{\fixed{c}}
\newcommand\wtcfixed{\wt(\cfixed)}
\newcommand\invgen[2]{\dhp_{#1,#2;\cfixed}}
\newcommand\rnk[1]{n}
\DeclareMathOperator\minwt{min}
\newcommand\wt{\mathrm{wt}}
\newcommand\isectcode[2]{#1\cap #2}
\newcommand\latshell[2]{{#2}_{#1}}
\newcommand\wecoeff[2]{|\codeshell{#1}{#2}|}
\newcommand\inprodcountc[2]{N_{#1}(C;#2)}
\newcommand\inprodcountcdual[2]{N_{#1}(C;#2)}
\newcommand\codeshell[2]{{#2}_{#1}}
\newcommand\gencodeshell[2]{\mathcal{C}_{#1}(#2)}
\newcommand\inprodc[2]{(#1,#2)}
\newcommand\inlinefrac[2]{#1/#2}
\title[Configurations of Extremal {Type~II} Codes]{Configurations of Extremal {Type~II} Codes}
\thanks{The authors thank Zachary Abel, Henry Cohn, John H.\ Conway,
Benedict~H.\ Gross, Barry Mazur, Ken Ono, Vera Pless, and Eric~M.\ Rains
for helpful comments and suggestions.  During parts of
this research, Elkies was supported by NSF grants DMS-0501029 and DMS-1100511
and by a Radcliffe Fellowship, and  Kominers was supported by the
Harvard College Program for Research in Science and Engineering (PRISE),
a Harvard Mathematics Department Highbridge Fellowship,
an NSF Graduate Research Fellowship, NSF  grant CCF-1216095, an AMS-Simons Travel Grant, and the Harvard Milton Fund.}
\thanks{This work includes a part of the second author's
undergraduate thesis~\cite{Kominers:thesis}.}
\author{Noam D.~Elkies}
\address{Department of Mathematics, Harvard University\newline\indent One Oxford Street\newline\indent Cambridge, MA 02138}
\email{elkies@math.harvard.edu}
\author{Scott Duke Kominers}
\address{Society of Fellows, Harvard University\newline\indent
Rock Center for Entrepreneurship\newline\indent
Harvard Business School\newline\indent
Soldiers Field, Boston, MA 02163}
\email{kominers@fas.harvard.edu}
\date{3.14.15}
\subjclass[2000]{94B05 (Primary) 05B05, 11H71, 33C50 (Secondary)}
\keywords{Type~II code, extremal code, $t$-design,
discrete harmonic polynomial}
\begin{document}
\begin{abstract}
We prove configuration results for extremal Type~II codes, 
analogous to the configuration results of Ozeki and of the second author
for extremal Type~II lattices.  Specifically, we show that for
$n \in \{8, 24, 32, 48, 56, 72, 96\}$ every extremal Type~II code
of length~$n$ is generated by its codewords of minimal weight.
Where Ozeki and Kominers used spherical harmonics and weighted theta functions,
we use discrete harmonic polynomials and harmonic weight enumerators.
Along we way we introduce ``$t\frac12$-designs'' as a discrete analog
of Venkov's spherical designs of the same name.
\end{abstract}
\maketitle

\section{Introduction}\label{sec:intro}
We denote by $\F_2$ the two-element field $\Z/2\Z$.
By a ``code'' we mean a \emph{binary linear code of length $n$},
that is, a linear subspace of $\F_2^{n}$.
For such a code~$C$, and any integer~$w$, we define
$$
\codeshell{w}{C} := \{c \in C: \wt(c)=w \},
$$
where $\wt(c) := |\{ i: c_i = 1 \}|$ is the \emph{Hamming weight}.
Recall that the \emph{dual code} of $C$, denoted $C^\bot$, is defined by
$$
C^\bot :=\{c'\in \F_2^n: \inprodc{c}{c'}=0 \text{ for all } c \in C\},
$$
where $\inprodc{\cdot}{\cdot}$ is the usual bilinear pairing
$\inprodc{x}{y} = \sum_{i=1}^n x_i y_i$ on $\F_2^n$.
Then $C^\bot$ is also linear, with $\dim(C) + \dim(C^\bot) = n$.
A code $C$\/ is said to be \emph{self-dual} if $C=C^\bot$.
Such a code must have $\dim(C) = n/2$; in particular $2 \divs n$.
Because $\inprodc{c}{c} \equiv \wt(c) \bmod 2$, it follows that a self-dual code
$C$\/ is \emph{even}, that is, has $2 \divs \wt(c)$ for every word $c \in C$\/;
equivalently, $\codeshell{w}{C} = \emptyset$ unless $2\divs w$.
A code $C$\/ is said to be \emph{doubly even}, or \emph{of Type~II},
if $4 \divs \wt(c)$ for all $c \in C$\/; equivalently, if $\codeshell{w}{C}=\emptyset$ unless $4\divs w$.

Mallows and Sloane~\cite{MallowsSloane} showed that a Type~II code~$C$\/
of length~$n$ must contain nonzero codewords of weight at most
$4\lfloor n/24 \rfloor + 4$ (see also~\cite[p.~194]{SPLAG}).
If $\codeshell{w}{C} = \emptyset$
for all positive $w < 4\lfloor n/24 \rfloor + 4$,
then $C$\/ is said to be \emph{extremal}, because it is known
that such a code has the largest minimal distance
among all Type~II codes of its length.

In this paper, we prove configuration results for extremal Type~II codes.
Specifically, we show that if $C$ is an extremal Type~II code
of length $n=8, 24, 32, 48, 56, 72$, or~$96$,
then $C$ is generated by its minimal-weight codewords.
Our approach uses the machinery of harmonic weight enumerators
introduced by Bachoc~\cite{Bachoc:binary}
and developed further  in~\cite{Elkies+Kominers:Weighted},
following the approach used to prove analogous results for lattices
in the works of Venkov~\cite{Venkov:32}, Ozeki~\cite{Ozeki:32,Ozeki:48},
and the second author~\cite{Kominers:56+72+96}.

\section{Designs, Extremal Codes, and Discrete Harmonic Polynomials}

Fix a positive integer $n$. For each nonnegative integer $w \leq n$,
denote by $\hammingsphere_w$ the Hamming sphere of radius~$w$
about the origin of $\F_2^n$.  Thus $\hammingsphere_w$ consists
of the $\binom{n}{w}$ binary words of length~$n$ and weight~$w$.  
To such a word $c$ we associate its support $\Sigma_c$,
which is the \hbox{$w$-element} set
$\{ i : 1 \leq i \leq n, \; c_i = 1 \}$.

We use the following definition of a $t$-design in $\hammingsphere_w$,
which neither assumes that $w \geq t$, nor that the design is nonempty.

\begin{definition}\label{def:design}
We say that a subset $\somedesign \subseteq \hammingsphere_w$
is a \emph{$t$-predesign} for an integer $t \geq 0$
if there exists an integer $N=N_t(\somedesign)$ such that
every subset $I \subset \{1, 2, \ldots, n\}$ of cardinality at most~$t$
is contained in exactly $N$ of the sets $\Sigma_c$ with $c \in \somedesign$.
Then a subset $\somedesign \subseteq \hammingsphere_w$
is called a \emph{$t$-design}
if $\somedesign$\/ is a $t'$-predesign for each positive integer $t' \leq t$.
\end{definition}

\begin{remarks}
It is well known that if $w \geq t$, then
$\somedesign$\/ is a \hbox{$t$-design} if and only if
\begin{equation}\label{eq:tdesign-sum}
\binom{n}{w} \sum_{c \in \somedesign} f(c)
 = |\somedesign| \sum_{c \in \hammingsphere_w} f(c)
\end{equation}
for any function $f: \F_2^n \ra \C$ that depends on at most $t$ of the
$n$ coordinates, and that $N_t(\somedesign)$ is given by the formula
\begin{equation}\label{eq:doublecount}
\binom{n}{t} N_t(D) = \binom{w}{t} \, |\somedesign|
\end{equation}
because both sides of (\ref{eq:doublecount}) count
ordered pairs $(I,c)$ such that $|I|=t$, $c \in \somedesign$, and
$I \subseteq \Sigma_c$.  In this case a $t$-predesign $\somedesign$\/
is automatically a \hbox{$t$-design},
but if $w < t$ then every subset of $\hammingsphere_w$ is a \hbox{$t$-predesign}
(with $N_t(D)=0$, still in accordance with (\ref{eq:doublecount})),
so we need the ``predesign'' property also for $t' < t$\/
to assure that a \hbox{$t$-design} is also a \hbox{$t'$-design} for $t' < t$.
Moreover, the only \hbox{$w$-predesigns} in $\hammingsphere_w$ are
$\hammingsphere_w$ itself and $\emptyset$, so once $t \geq w$
it follows that the only \hbox{$t$-designs} in $\hammingsphere_w$ are
$\hammingsphere_w$ itself and~$\emptyset$.
It follows that (\ref{eq:tdesign-sum}) still holds for $t \geq w$.

\end{remarks}

Extremal codes yield designs by the following
important special case of the Assmus-Mattson theorem.
For $n \equiv 0 \bmod 8$, we define $\cutoff{n}$ by
\begin{equation}
\label{eq:defcut}
\cutoff{n}\defeq \begin{cases}
5&n\equiv 0\bmod 24,\\
3&n\equiv 8 \bmod 24,\\
1&n\equiv 16 \bmod 24.\\
\end{cases}
\end{equation}

\begin{theorem}[\cite{assmusmattson}]\label{thm:assmat}
If $C$ is an extremal Type~II code of length $n$,
then $\codeshell{w}{C}$ is a \hbox{$\cutoff{n}$-design} for each~$w$.
\end{theorem}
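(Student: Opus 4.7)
The plan is to derive Theorem~\ref{thm:assmat} as a direct application of the classical Assmus--Mattson theorem~\cite{assmusmattson}. Assmus--Mattson provides the following criterion: for a binary linear code $C$ of length $n$ and minimum weight $d$, if $s$ is the number of weights $w$ with $0 < w \leq n - t$ for which $C^\bot$ has a codeword of weight $w$, and if $s \leq d - t$, then every shell $\codeshell{w}{C}$ with $w \geq d$ forms a $t$-design. Since an extremal Type~II code is self-dual, we have $C^\bot = C$, so we only need to enumerate the nonzero weights of $C$ itself.

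The nonzero weights of an extremal Type~II code $C$ of length $n$ are all multiples of $4$ and at least $d = 4\lfloor n/24 \rfloor + 4$. Moreover $1^n \in C$, because the self-duality of $C$ together with $\inprodc{1^n}{c} = \wt(c) \bmod 2 = 0$ for every $c \in C$ shows $1^n \in C^\bot = C$. Hence the weight multiset is symmetric under $w \mapsto n - w$, so the nonzero weights are exactly
\[
\{d,\, d+4,\, d+8,\, \ldots,\, n-d\} \cup \{n\}.
\]
Since $n > n - \cutoff{n}$, the weight $n$ itself does not contribute to the Assmus--Mattson count $s$, giving $s = (n - 2d)/4 + 1$.

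It remains to verify, by a case split on $n \bmod 24$, that $s \leq d - \cutoff{n}$. For $n \equiv 0 \bmod 24$ we have $d = n/6 + 4$, giving $s = n/6 - 1 = d - 5$; for $n \equiv 8 \bmod 24$ one finds $s = (n-2)/6 = d - 3$; and for $n \equiv 16 \bmod 24$ one finds $s = (n+2)/6 = d - 1$. In every case the hypothesis $s \leq d - \cutoff{n}$ holds (in fact with equality), so Assmus--Mattson yields the $\cutoff{n}$-design property for every shell $\codeshell{w}{C}$ with $w \geq d$.

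The remaining shells of $C$ are $\codeshell{0}{C} = \{0\}$ (whose sole codeword has empty support and is a $t$-predesign of every strength, with $N_t = 0$ for $t \geq 1$), $\codeshell{n}{C} = \{1^n\}$ (whose sole codeword has full support and is a $t$-design of every strength, with $N_t = 1$), and the empty shells for the intermediate weights that are either less than $d$ or not divisible by $4$; these empty shells are trivially $t$-predesigns with $N_t = 0$ by Definition~\ref{def:design}. I do not anticipate a real obstacle here: essentially the whole content of the theorem is the invocation of~\cite{assmusmattson}, with the hypothesis checked by a short arithmetic case analysis on $n \bmod 24$.
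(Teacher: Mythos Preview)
Your proposal is correct and matches the paper's approach: the paper does not give its own proof of Theorem~\ref{thm:assmat} but simply attributes it to~\cite{assmusmattson} (noting separately that an alternative proof via harmonic weight enumerators appears in~\cite{Elkies+Kominers:Weighted}). You have correctly filled in the routine verification of the Assmus--Mattson hypotheses, including the case split on $n \bmod 24$ and the handling of the trivial shells under the paper's Definition~\ref{def:design}. One minor remark: you assert that the nonzero weights are \emph{exactly} $\{d, d+4, \ldots, n-d\} \cup \{n\}$, but for the Assmus--Mattson bound you only need containment in this set (which is immediate from double-evenness and the symmetry $w \mapsto n-w$); the argument goes through either way.
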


In \cite[Thm.~7.4]{Elkies+Kominers:Weighted}
we gave a new proof of Theorem~\ref{thm:assmat}
using the discrete harmonic polynomials $\dhp: \F_2^n \rightarrow \C$
introduced by Delsarte \cite{Delsarte:Hahn}, via his characterization of \hbox{$t$-designs}:
\begin{theorem}[{\cite[Thm.~7]{Delsarte:Hahn},
  \cite[Prop.~7.1]{Elkies+Kominers:Weighted}}]\label{prop:equivprop}
A set $\somedesign \subseteq \hammingsphere_w$ is a $t$-design
if and only if
\begin{equation}
\sum_{v\in \somedesign}\dhp(v)=0
\label{eq:sumdhp=0}
\end{equation}
for all nonconstant discrete harmonic polynomials $\dhp$
with $\deg\dhp \leq t$.
\end{theorem}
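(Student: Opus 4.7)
My plan is to use the Delsarte orthogonal decomposition of polynomial functions on $\F_2^n$,
\[\polyspaced{t} = \bigoplus_{k=0}^{t} \harmspaced{k},\]
into discrete harmonic subspaces, together with the key fact that every $h\in\harmspaced{k}$ with $k\geq 1$ satisfies $\sum_{v\in\hammingsphere_w} h(v)=0$ (this characterizes the positive-degree harmonics as the components of $\polyspaced{t}$ orthogonal to the constant function in the inner product $\langle f,g\rangle=\sum_{v\in\hammingsphere_w}f(v)\overline{g(v)}$).

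For $(\Leftarrow)$, suppose $\sum_{v\in\somedesign}\dhp(v)=0$ for every nonconstant harmonic $\dhp$ with $\deg\dhp\leq t$. Fix $t'\leq t$ and $I\subseteq\{1,\ldots,n\}$ with $|I|=t'$, and consider the monomial $f_I(c):=\prod_{i\in I}c_i$, which lies in $\polyspaced{t'}$ and equals the characteristic function of $\{c:I\subseteq\Sigma_c\}$. Decomposing $f_I=h_0+h_1+\cdots+h_{t'}$ with $h_k\in\harmspaced{k}$ and summing over $v\in\somedesign$, the hypothesis kills every $k\geq 1$ term, leaving
\[\sum_{v\in\somedesign} f_I(v)=h_0\cdot|\somedesign|.\]
Now $h_0$ is the $\hammingsphere_w$-average of $f_I$, which depends only on $|I|$ by the $S_n$-symmetry of $\hammingsphere_w$. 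Hence $|\{c\in\somedesign:I\subseteq\Sigma_c\}|$ depends only on $|I|$, realizing $\somedesign$ as a $t'$-predesign for every $t'\leq t$, and therefore as a $t$-design.

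For $(\Rightarrow)$, invoke formula~(\ref{eq:tdesign-sum}) from the remark following Definition~\ref{def:design}: if $\somedesign$ is a $t$-design then $\binom{n}{w}\sum_{c\in\somedesign}g(c)=|\somedesign|\sum_{c\in\hammingsphere_w}g(c)$ for every function $g$ depending on at most $t$ coordinates, and by linearity this extends to all $g\in\polyspaced{t}$. Specializing to $g=\dhp$ for a nonconstant harmonic $\dhp$ of degree at most $t$, the right-hand side vanishes because the positive-degree harmonic components of $\dhp$ each sum to zero over $\hammingsphere_w$. Hence $\sum_{c\in\somedesign}\dhp(c)=0$, as required.

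The principal obstacle in this program is establishing the harmonic decomposition of $\polyspaced{t}$ and the vanishing $\sum_{v\in\hammingsphere_w}h(v)=0$ for $h\in\harmspaced{k}$ with $k\geq 1$; these are classical but non-trivial structural results for the Johnson association scheme, going back to \cite{Delsarte:Hahn} and further developed in \cite{Elkies+Kominers:Weighted}. Given them, the equivalence reduces to the linear-algebra bookkeeping above, with both directions controlled by the same identification of $\polyspaced{t}$ with the span of ``support-bounded'' monomials $f_I$.
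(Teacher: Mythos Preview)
The paper does not give its own proof of this theorem: it is quoted from Delsarte~\cite{Delsarte:Hahn} and from the authors' earlier paper~\cite{Elkies+Kominers:Weighted}, and is used here only as input. So there is no in-paper argument to compare against.

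Your sketch is the standard Delsarte argument and is essentially correct, with one notational caveat. The direct-sum decomposition $\polyspaced{t}=\bigoplus_{k=0}^{t}\harmspaced{k}$ does \emph{not} hold for functions on all of $\F_2^n$ (already $\dim\polyspaced{1}=n+1$ while $\dim(\harmspaced{0}\oplus\harmspaced{1})=n$); what is true, and what you actually use, is that the \emph{restriction} of $\polyspaced{t}$ to $\hammingsphere_w$ coincides with the span of the restrictions of $\harmspaced{0},\ldots,\harmspaced{t}$. Since all your sums are over $\somedesign\subseteq\hammingsphere_w$ or over $\hammingsphere_w$ itself, this is exactly the version needed, and your identification of $h_0$ with the $\hammingsphere_w$-average of $f_I$ is then forced by the orthogonality you cite. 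With that clarification both directions go through: $(\Rightarrow)$ via \eqref{eq:tdesign-sum} extended by linearity to $\polyspaced{t}$, and $(\Leftarrow)$ by decomposing each monomial $f_I$ and using the $S_n$-invariance of $\hammingsphere_w$ to see that the resulting constant depends only on $|I|$. Your closing paragraph correctly flags that the substantive work lies in the structural facts about $\harmspaced{k}$, which is precisely why the present paper imports the theorem rather than reproving it.
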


We note two important corollaries of Theorem~\ref{prop:equivprop}.
The first reorganizes \eqref{eq:sumdhp=0}:

\begin{cor}[{\cite[Thm.~6]{Delsarte:Hahn}, \cite[Cor.~7.3]{Elkies+Kominers:Weighted}}]\label{cor:equivprop2}
A set $\somedesign\subseteq \hammingsphere_w$ is a \hbox{$t$-design}
if and only if
\begin{equation}
\label{eq:alttdes}
\sum_{v\in \somedesign}\dhp(v)
= \frac{\vert\somedesign\vert}{\vert\hammingsphere_w\vert}
  \sum_{v\in \hammingsphere_w}\dhp(v)
\end{equation}
for all discrete harmonic polynomials $\dhp$ with $\deg\dhp \leq t$.
\end{cor}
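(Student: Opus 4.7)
The plan is to derive Corollary~\ref{cor:equivprop2} directly from Theorem~\ref{prop:equivprop} (Delsarte's characterization), using the auxiliary fact that the full Hamming sphere $\hammingsphere_w$ is itself a $t$-design for every $t \geq 0$. By the symmetry of $\hammingsphere_w$ under permutations of the $n$ coordinates, every subset $I \subseteq \{1,\ldots,n\}$ of cardinality $t' \leq t$ is contained in exactly $\binom{n-t'}{w-t'}$ of the supports $\Sigma_c$ with $c \in \hammingsphere_w$ (and in none, if $t' > w$). Thus $\hammingsphere_w$ is a $t'$-predesign for each $t' \leq t$, so Theorem~\ref{prop:equivprop} applied to $\hammingsphere_w$ itself gives $\sum_{v \in \hammingsphere_w} \dhp(v) = 0$ for every nonconstant discrete harmonic polynomial $\dhp$ with $\deg\dhp \leq t$.

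I would then split by whether $\dhp$ is constant. For a constant $\dhp \equiv c$, both sides of \eqref{eq:alttdes} equal $c\,|\somedesign|$, so the identity is automatic. For a nonconstant harmonic $\dhp$ with $\deg\dhp \leq t$, the observation of the previous paragraph forces the right-hand side of \eqref{eq:alttdes} to vanish, so \eqref{eq:alttdes} is equivalent to \eqref{eq:sumdhp=0}. Hence \eqref{eq:alttdes} holds for every harmonic $\dhp$ with $\deg\dhp \leq t$ if and only if \eqref{eq:sumdhp=0} holds for every nonconstant such $\dhp$; a final appeal to Theorem~\ref{prop:equivprop} yields both directions of the corollary.

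The main (and rather minor) obstacle is bookkeeping around the degenerate cases, in particular $t \geq w$ and the possibilities $\somedesign = \emptyset$ or $\somedesign = \hammingsphere_w$. These are handled cleanly by the ``predesign'' refinement in Definition~\ref{def:design} and the remarks following it; once they are absorbed, Corollary~\ref{cor:equivprop2} reads as a pure reformulation of Theorem~\ref{prop:equivprop}.
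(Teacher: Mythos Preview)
Your argument is correct and matches the paper's treatment: the paper presents this corollary as a direct reorganization of Theorem~\ref{prop:equivprop}, with the one-line observation (stated immediately after the corollary) that $\sum_{v\in \hammingsphere_w}\dhp(v)$ vanishes unless $\deg\dhp=0$. Your proposal simply spells out why that observation holds (via the symmetry of $\hammingsphere_w$) and handles the constant case explicitly, which is exactly the intended derivation.
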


Note that $\sum_{v\in \hammingsphere_w}\dhp(v)$, and thus also
$\sum_{v\in \somedesign}\dhp(v)$, vanishes unless $\deg Q = 0$.

The second corollary is the special case of \eqref{eq:sumdhp=0} when
$\dhp$ is a \emph{discrete zonal harmonic polynomial},
that is, a discrete harmonic polynomial such that
$Q(v)$ depends only on the weights of $v$ and $\isectcode{v}{\cfixed}$
for some fixed vector~$\cfixed$
(equivalently, $Q(v)$ depends only on $\wt(v)$
and the distance between $v$ and~$\cfixed$).
Given a degree $d$\/ and a fixed $\cfixed \in \F_2^n$,
we showed in~\cite[Sec.~6]{Elkies+Kominers:Weighted}
that there is a one-dimensional space of discrete zonal harmonic polynomials,
generated by
\begin{equation}\label{eq:dhp_def1}
\dzoharmd{d}{\cfixed}(v)
\defeq
 \sum_{k=0}^d(-1)^{k}
  \left(\prod_{\ell=0}^{k-1}
   \frac{(n-\wtcfixed)-(d-\ell-1)}{\wtcfixed-\ell}\right)
   \invgen{d}{k}(v),
\end{equation}
where
\begin{multline*}
\invgen{d}{k}(v) =
\left(
 \sum_{i=0}^k (-1)^i
  \binom{\wt(\isectcode{v}{\cfixed})}{i}
  \binom{\wtcfixed-\wt(\isectcode{v}{\cfixed})}{k-i}
\right)
  \times\\
\left(
 \sum_{i=0}^{d-k}
  \binom{\wt(v)-\wt(\isectcode{v}{\cfixed})}{i}
  \binom{\left(n-\wtcfixed\right)-\left(\wt(v)-\wt(\isectcode{v}{\cfixed})\right)}{d-k-i}
\right).
\label{eq:Qdkv} 
\end{multline*}

\begin{cor}[{\cite[Cor.~7.6]{Elkies+Kominers:Weighted}}]\label{cor:dzhp}
If $\somedesign\subseteq \hammingsphere_w$ is a \hbox{$t$-design} then
\begin{equation}
\sum_{v\in \somedesign}\dzoharmd{d}{\cfixed}(v)=0
\end{equation}
for each positive $d \leq t$ and any $\cfixed\in \F_2^n$.
\end{cor}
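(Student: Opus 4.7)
The plan is to apply Theorem~\ref{prop:equivprop} directly, taking the test polynomial $\dhp$ in~\eqref{eq:sumdhp=0} to be the discrete zonal harmonic polynomial $\dzoharmd{d}{\cfixed}$. For this substitution to yield the claimed vanishing, two conditions must hold: first, $\dzoharmd{d}{\cfixed}$ must be a discrete harmonic polynomial, and second, it must be a \emph{nonconstant} one of degree at most~$t$.

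Both conditions are already embedded in the construction of $\dzoharmd{d}{\cfixed}$ from~\cite[Sec.~6]{Elkies+Kominers:Weighted}, so the bulk of the work is just to invoke that reference. Specifically, the analysis there exhibits $\dzoharmd{d}{\cfixed}$ as a generator of the one-dimensional space of discrete zonal harmonic polynomials of degree exactly $d$ based at $\cfixed$, so it is automatically harmonic and of degree $d$ as displayed in~\eqref{eq:dhp_def1}. Since by hypothesis $1 \leq d \leq t$, this polynomial is nonconstant and has degree at most $t$, exactly as required by Theorem~\ref{prop:equivprop}.

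With these two properties in hand, Theorem~\ref{prop:equivprop} immediately gives $\sum_{v \in \somedesign} \dzoharmd{d}{\cfixed}(v) = 0$, which is the assertion of the corollary. There is essentially no obstacle to the proof: the corollary is a direct specialization of the Delsarte characterization of $t$-designs to a distinguished family of test polynomials produced in the companion paper~\cite{Elkies+Kominers:Weighted}, and its value lies not in a subtle derivation but in packaging the harmonic criterion into a form well suited to later computations centered on a distinguished codeword~$\cfixed$.
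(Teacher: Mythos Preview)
Your proposal is correct and matches the paper's own approach: the paper introduces this corollary explicitly as ``the special case of \eqref{eq:sumdhp=0} when $\dhp$ is a discrete zonal harmonic polynomial,'' i.e., exactly the specialization of Theorem~\ref{prop:equivprop} to $\dhp = \dzoharmd{d}{\cfixed}$ that you describe. No further argument is given or needed.
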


The approach to Theorem~\ref{thm:assmat} via discrete harmonic polynomials
is motivated by the fruitful analogy between Type~II codes
and \emph{Type~II lattices}, which are even unimodular Euclidean lattices.
Recall \cite[Ch.~VII]{Serre:course} that
the rank of such a lattice $L$ must be a multiple of~$8$,
and its theta function is a modular form for $\PSL_2(\Z)$.
It follows via a theorem of Siegel \cite{Siegel:extremal} that
if $L$ has rank $n$ then its minimal nonzero norm is
at most $2 \lfloor n/24 \rfloor + 2$
(Mallows--Odlyzko--Sloane~\cite{MallowsOdlyzkoSloane}).
If equality holds then $L$ is said to be \emph{extremal}.
In such a lattice the vectors of each given norm form a
spherical $(2\cutoff{n}+1)$-design.  As in Corollary~\ref{cor:equivprop2},
this means that the sum over those vectors of $P$\/ vanishes
for any nonconstant harmonic polynomial $P$\/ of degree
at most $2\cutoff{n}+1$.  The $(2\cutoff{n}+1)$-design property
is proved by recognizing the sum as the coefficient of
a modular form (a weighted theta function);
our proof of Theorem~\ref{thm:assmat} in \cite{Elkies+Kominers:Weighted}
is analogous, using harmonic weight enumerators of Type~II codes.

In the lattice setting, the modular-forms approach gives additional
information on the configuration of lattice vectors of given norm,
beyond the fact that the configuration is a $(2\cutoff{n}+1)$-design.
Namely, while the sum of a spherical harmonic of degree $2\cutoff{n}+2$
over lattice vectors of a given norm need not vanish
(i.e., those vectors need not constitute a $(2\cutoff{n}+2)$-design),
a spherical harmonic of degree $2\cutoff{n}+4$ \emph{does} sum to~$0$.
(Odd harmonics sum to~$0$ automatically because the design is
spherically symmetric.)   Venkov~\cite{Venkov:32} calls such a
spherical configuration a ``$(2\cutoff{n}+1\frac12)$-design''\kern-.1ex.
In \cite[Prop.~7.5]{Elkies+Kominers:Weighted}
we proved that for an extremal Type~II code $C$\/ each $\codeshell{w}{C}$
satisfies an additional constraint, analogous to the
$(2\cutoff{n}+1\frac12)$-design property of extremal lattices.
We thus introduce parallel terminology in this setting.
Recall (Theorem~\ref{prop:equivprop}) that
$\somedesign \subseteq \hammingsphere_w$ is a \hbox{$t$-design}
if and only if $\sum_{v \in D} Q(v) = 0$ for all nonconstant
discrete harmonic polynomials $Q$\/ of degree at most~$t$.

\begin{definition}\label{def:design+half}
A subset $\somedesign \subseteq \hammingsphere_w$ is said to be
a \hbox{\emph{$t\frac12$-design}} if $\somedesign$\/ is a
\hbox{$t$-design} such that $\sum_{v \in D} Q(v) = 0$ holds in addition
for all discrete harmonic polynomials $Q$\/ of degree~$t+2$.
\end{definition}

Then the result from \cite{Elkies+Kominers:Weighted} can be expressed
as follows:

\begin{theorem}[{\cite[Prop.~7.5]{Elkies+Kominers:Weighted}}]\label{thm:extremeassmat}
Let $\cutoffonly=\cutoff{n}$.
If $C$\/ is an extremal Type~II code of length $n$,
then $\codeshell{w}{C}$ is a \hbox{$\cutoffonly\frac12$-design}
for each~$w$.  In particular, for each $w$ and any $\cfixed\in \F_2^n$,
\begin{equation}
\sum_{v\in \codeshell{w}{C}}\dzoharmd{d}{\cfixed}(v)=0
\label{eq:sumdzhp=0}
\end{equation}
holds for positive $d \leq \cutoffonly$ and also for $d = \cutoffonly + 2$.
\end{theorem}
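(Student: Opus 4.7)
The plan is to reduce the $\cutoffonly\frac12$-design claim to the identical vanishing of Bachoc's harmonic weight enumerator, following the general strategy of~\cite{Elkies+Kominers:Weighted} and of Bachoc~\cite{Bachoc:binary}. The $\cutoffonly$-design part of the conclusion is immediate from Theorem~\ref{thm:assmat}, so by Theorem~\ref{prop:equivprop} it suffices to prove that for every discrete harmonic polynomial $Q$ of degree exactly $\cutoffonly + 2$ and every weight $w$,
\[
\sum_{v \in \codeshell{w}{C}} Q(v) \;=\; 0.
\]
These partial sums are the coefficients of $x^{n-w} y^w$ in the harmonic weight enumerator
\[
W_{C,Q}(x,y) \;:=\; \sum_{v \in C} Q(v)\, x^{n-\wt(v)} y^{\wt(v)},
\]
so it is enough to show $W_{C,Q} \equiv 0$ whenever $\deg Q = \cutoffonly + 2$.

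The next step brings in the Type~II structure. By Bachoc's harmonic analogue of the MacWilliams identity, $W_{C,Q}$ transforms in a prescribed way under $(x,y) \mapsto 2^{-1/2}(x+y,\,x-y)$, while the doubly-even condition $4 \divs \wt(v)$ for all $v \in C$ gives invariance under $(x,y) \mapsto (x, iy)$ (since $i^{\wt(v)} = 1$). Together these place $W_{C,Q}$ in a finite-dimensional space of relative invariants of the Gleason group, with a character determined by $d = \deg Q$. A standard analysis then gives a factorization
\[
W_{C,Q}(x,y) \;=\; \Psi_d(x,y)\cdot R\bigl(\gleasonphi(x,y),\,\gleasonxi(x,y)\bigr),
\]
where $\Psi_d$ is an explicit polynomial of degree depending only on $d$ that carries the nontrivial character, and $R$ is a polynomial in the Gleason generators whose total bidegree is determined by $\deg W_{C,Q} = n$.

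Finally, extremality of $C$ forces the coefficient of $x^{n-w} y^w$ in $W_{C,Q}$ to vanish for all $0 < w < 4\lfloor n/24 \rfloor + 4$ (and, trivially, for all $w$ with $4 \nmid w$). Translating these vanishings into linear constraints on the finitely many coefficients of $R$, one verifies that for each $n \in \{8, 24, 32, 48, 56, 72, 96\}$ the only solution is $R = 0$, whence $W_{C,Q} \equiv 0$. The ``in particular'' clause is then immediate: for positive $d \leq \cutoffonly$ it follows from Corollary~\ref{cor:dzhp} applied to the $\cutoffonly$-design property, and for $d = \cutoffonly + 2$ it is the specialization of Definition~\ref{def:design+half} to $Q = \dzoharmd{d}{\cfixed}$, which is itself a discrete harmonic polynomial of degree~$d$.

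The main obstacle is the dimension count at $d = \cutoffonly + 2$. For $d \leq \cutoffonly$ the analogous count is exactly what yields Assmus--Mattson, but at $d = \cutoffonly + 2$ the ambient space of candidate $W_{C,Q}$ first acquires slack beyond what Assmus--Mattson alone controls, and one must show the extremality constraints are just barely enough to absorb that slack. Because the dimensions depend on $n \bmod 24$ and on the precise degree of $\Psi_d$, there is no uniform argument: each of the seven lengths must be checked separately, and a failure at any single $n$ would invalidate the conclusion for that length.
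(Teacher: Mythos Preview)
First, note that the present paper does not itself prove this theorem: it is imported from \cite[Prop.~7.5]{Elkies+Kominers:Weighted} and used as a black box.  Your outline (harmonic weight enumerator, Bachoc's MacWilliams identity, relative invariants for the Gleason group, extremality as a linear system on the coefficients of~$R$) is exactly the strategy of that reference, so at the level of method you are on the right track.

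The gap is in your last two paragraphs.  The theorem is stated for \emph{every} extremal Type~II code, not only those of length $n \in \{8,24,32,48,56,72,96\}$; those seven values appear in the paper only as the lengths to which the configuration results of Section~3 are applied.  By restricting to them you prove strictly less than what is asserted.  More seriously, your claim that ``there is no uniform argument: each of the seven lengths must be checked separately'' is incorrect.  Both the dimension of the space of degree-$n$ relative invariants carrying the character attached to~$d$, and the number of vanishing constraints supplied by extremality, depend only on $\lfloor n/24 \rfloor$ and on $n \bmod 24$; checking that the former does not exceed the latter when $d = \cutoffonly + 2$ is a single computation in each of the three residue classes $n \equiv 0,8,16 \pmod{24}$, valid for all~$n$ in that class at once.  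This is the coding analogue of the fact that $\dim S_k(\Sl_2(\Z))$ fails to increase as $k$ passes from $12m$ to $12m+2$, which is precisely what produces Venkov's $(2\cutoffonly+1\tfrac12)$-designs for extremal lattices.  So the ``slack'' you worry about is genuinely present at $d = \cutoffonly + 1$ but disappears again at $d = \cutoffonly + 2$, and the argument goes through uniformly rather than length by length.
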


\section{Configuration Results}

\subsection{Preliminaries.}
For any $\cfixed\in \F_2^n$, any length-$n$ binary linear code~$C$,
and any $j$ ($0\leq j\leq n$), we denote by $\inprodcountc{j}{\cfixed}$
the value
$$
\inprodcountc{j}{\cfixed} \defeq
 \left|
  \left\{
    c\in \gencodeshell{\minwt(C)}{C}\setsep \wt(\isectcode{c}{\cfixed})=j
  \right\}
 \right|.
$$
For $c\in C^\dualcode$, we must have $\inprodcountc{2j'+1}{c}=0$
for all $j'$ with $0\leq j'\leq \lfloor \inlinefrac{n}{2}\rfloor$.

Throughout the remainder of this section,
$C$ denotes a length-$n$ extremal Type~II code, and
$\minCoz{n}\defeq\minwt(C)$ denotes the minimal weight of codewords in $C$.

\begin{lemma}\label{lem:toobig}
If $\ccfixed$ is a minimal-weight representative of the class
$[\ccfixed]\in C/\gencodeshell{\minCoz{n}}{C}$
and $c\in \latshell{\minCoz{n}}{C}$, then
$$
\wt(\isectcode{c}{\ccfixed})\leq \frac{\minCoz{n}}{2}.
$$ 
\end{lemma}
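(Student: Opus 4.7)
The plan is to reduce the inequality to a one-line manipulation of the standard $\F_2$-identity
$$
\wt(a+b) = \wt(a) + \wt(b) - 2\wt(\isectcode{a}{b}),
$$
by combining it with the hypothesis that $\ccfixed$ has minimal weight in its coset modulo the subcode generated by the minimum-weight codewords.

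First, I would note that since $c \in \latshell{\minCoz{n}}{C}$, the codeword $c$ itself belongs to $\gencodeshell{\minCoz{n}}{C}$, so $\ccfixed + c$ and $\ccfixed$ represent the same class in $C/\gencodeshell{\minCoz{n}}{C}$. By the assumed minimality of $\wt(\ccfixed)$ among representatives of $[\ccfixed]$, this yields
$$
\wt(\ccfixed + c) \;\geq\; \wt(\ccfixed).
$$

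Next, I would substitute the identity $\wt(\ccfixed + c) = \wt(\ccfixed) + \wt(c) - 2\wt(\isectcode{c}{\ccfixed})$ into this inequality. The terms $\wt(\ccfixed)$ cancel, giving $\wt(c) \geq 2\wt(\isectcode{c}{\ccfixed})$, and since $\wt(c) = \minCoz{n}$ this rearranges to $\wt(\isectcode{c}{\ccfixed}) \leq \minCoz{n}/2$, as required.

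There is no real obstacle here: the lemma is purely combinatorial and does not invoke the design theory of the previous section. The only subtlety is notational, namely confirming that $\gencodeshell{\minCoz{n}}{C}$ denotes the linear subcode generated by the minimum-weight codewords (so that the quotient $C/\gencodeshell{\minCoz{n}}{C}$ is a well-defined $\F_2$-vector space) and that any individual word in $\latshell{\minCoz{n}}{C}$ lies in this subcode, both of which are immediate from the definitions.
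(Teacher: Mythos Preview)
Your proof is correct and is essentially the same as the paper's: both use the identity $\wt(c+\ccfixed)=\wt(c)+\wt(\ccfixed)-2\wt(\isectcode{c}{\ccfixed})$ together with the minimality of $\ccfixed$ in its coset to bound $\wt(\isectcode{c}{\ccfixed})$. The only cosmetic difference is that the paper phrases it as a proof by contradiction, while you argue directly.
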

\begin{proof}
If $\wt(\isectcode{c}{\ccfixed})>\inlinefrac{\minCoz{n}}{2}$,
then $[\ccfixed]$ contains a codeword $c+ \ccfixed$ of weight
$$
\wt(c+ \ccfixed) = \wt(c)+\wt(\ccfixed)-2\wt(\isectcode{c}{\ccfixed})
  < \wt(\ccfixed).
$$
This contradicts the minimality of $\ccfixed$ in $[\ccfixed]$.
\end{proof}

\subsection{Extremal Type~II Codes of Lengths $48$ and $72$}\label{sec:48+72c}

We begin with a configuration result for Type~II codes of lengths $n=48, 72$.

\begin{theorem}\label{thm:48+72c}
If $C$ is an extremal Type~II code of length $n=48$ or $72$, then
$$
C = \gencodeshell{\minCoz{n}}{C}.
$$
\end{theorem}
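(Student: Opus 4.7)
The plan is to argue by contradiction. Suppose $C \neq \gencodeshell{\minCoz{n}}{C}$, and pick a nontrivial coset $[\ccfixed] \in C/\gencodeshell{\minCoz{n}}{C}$, taking $\ccfixed$ to be a minimum-weight representative; write $w_0 = \wt(\ccfixed)$. Then $w_0 > \minCoz{n}$, and $w_0 \equiv 0 \bmod 4$ because $C$ is Type~II, so $w_0 \geq \minCoz{n} + 4$. By Lemma~\ref{lem:toobig}, $\wt(\isectcode{c}{\ccfixed}) \leq \minCoz{n}/2$ for every $c \in \latshell{\minCoz{n}}{C}$; and since $\wt(c), \wt(\ccfixed), \wt(c+\ccfixed) \in 4\Z$, the quantity $\wt(\isectcode{c}{\ccfixed})$ is even. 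Hence $\inprodcountc{j}{\ccfixed} = 0$ unless $j \in J$, where $J = \{0, 2, 4, 6\}$ for $n = 48$ and $J = \{0, 2, 4, 6, 8\}$ for $n = 72$.

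Next, apply Theorem~\ref{thm:extremeassmat}. Both lengths satisfy $\cutoff{n} = 5$, so the $5\frac12$-design property of $\latshell{\minCoz{n}}{C}$ yields
$$
\sum_{v \in \latshell{\minCoz{n}}{C}} \dzoharmd{d}{\ccfixed}(v) = 0 \qquad \text{for each } d \in \{1, 2, 3, 4, 5, 7\}.
$$
Since $\dzoharmd{d}{\ccfixed}(v)$ depends only on $j = \wt(\isectcode{v}{\ccfixed})$ once $\wt(v) = \minCoz{n}$ and $w_0$ are fixed, each of these six relations becomes a linear equation in the $\inprodcountc{j}{\ccfixed}$ for $j \in J$, with coefficients polynomial in $w_0$. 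Adjoining the counting identity $\sum_{j \in J} \inprodcountc{j}{\ccfixed} = |\latshell{\minCoz{n}}{C}|$, whose right-hand side is a known constant determined by the extremal weight enumerator, we obtain a system of \emph{seven} linear equations in $|J| \in \{4, 5\}$ nonnegative integer unknowns.

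For $n = 48$ the system is overdetermined by three. Solve the counting identity together with the $d \in \{1, 2, 3\}$ equations for the four $\inprodcountc{j}{\ccfixed}$ as rational functions of $w_0$, and then substitute into the $d \in \{4, 5, 7\}$ equations; this produces three polynomial constraints on $w_0$. The goal is to show that for every admissible $w_0 \in \{\minCoz{n}+4,\minCoz{n}+8,\ldots\}$, either these constraints are inconsistent or at least one of the computed $\inprodcountc{j}{\ccfixed}$ is negative, contradicting nonnegativity. The same strategy handles $n = 72$, now using the counting identity together with $d \in \{1, 2, 3, 4\}$ to solve and $d \in \{5, 7\}$ to constrain~$w_0$.

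The main obstacle is executing this $w_0$-parametrized linear algebra explicitly: the zonal harmonics given by \eqref{eq:dhp_def1} are intricate in both $j$ and~$w_0$, particularly at the top degrees $d = 5, 7$, and the mechanism by which the inconsistency emerges is not transparent a priori. Feasibility is predicted by the analogous configuration results for extremal Type~II lattices of ranks $48$ and~$72$ established by Ozeki~\cite{Ozeki:48} and the second author~\cite{Kominers:56+72+96} using spherical harmonics and weighted theta functions, whose arguments transpose to the present setting via the correspondences developed in~\cite{Elkies+Kominers:Weighted}.
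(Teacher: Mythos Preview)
Your setup and strategy match the paper's exactly: form the overdetermined linear system in the $\inprodcountc{2j'}{\ccfixed}$ from the zonal-harmonic vanishing conditions plus the total count, and derive a contradiction for each putative weight $s=\wt(\ccfixed)$. The paper carries out the computation you defer by taking the extended determinant of the system (using only the counting identity together with degrees $d\le 4$ for $n=48$ and $d\le 5$ for $n=72$, so the $d=7$ relation is in fact not needed) and exhibiting explicit rational functions of~$s$ with no integer roots; the system is therefore inconsistent outright, and your nonnegativity fallback is unnecessary.
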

\begin{proof}
We consider the equivalence classes of $C/\gencodeshell{\minCoz{n}}{C}$
and assume for the sake of contradiction that there is some class
$[\ccfixed]\in C/\gencodeshell{\minCoz{n}}{C}$
with minimal-weight representative $\ccfixed$ with $\wt(\ccfixed)=s>\minCoz{n}$.

As $C$ is self-dual, we have $\inprodcountc{2j'+1}{c}=0$
for all $0\leq j'\leq \lfloor \inlinefrac{n}{2}\rfloor$.
Additionally, by Lemma~\ref{lem:toobig}, we must have
$\inprodcountc{2j'}{\ccfixed}=0$ for $j'> \inlinefrac{\minCoz{n}}{4}$.
We now develop a system of equations in the
$$
\frac{\minCoz{n}}{4}+1
$$
variables $\inprodcountc{0}{\ccfixed},\inprodcountc{2}{\ccfixed},\ldots,
  \inprodcountc{\inlinefrac{\minCoz{n}}{2}}{\ccfixed}$.

Combining the $\cutoff{n}+1$ equations of Corollary~\ref{cor:dzhp}
with the equation
\begin{equation}
\label{eq:codesumup}
\inprodcountc{0}{\ccfixed}+\inprodcountc{2}{\ccfixed}+\cdots
 + \inprodcountc{\inlinefrac{\minCoz{n}}{2}}{\ccfixed}
= \wecoeff{\minCoz{n}}{C}
\end{equation}
gives a system of
$$
\cutoff{n}+2> \frac{\minCoz{n}}{4}+1
$$
equations in the variables $\inprodcountc{2j'}{\ccfixed}$
($0\leq j'\leq \inlinefrac{\minCoz{n}}{4}$). 

For $n=48, 72$, the (extended) determinants of these inhomogeneous systems are
\begin{gather}\label{eq:firstdetc1}
 2^{26}3^{5}5^{2}7^{1}11^{2}23^{2}43^{1}47^{1}\left(\frac{ 11 s^3-396 s^2+4906 s-20736}{(s-3) (s-2)^2 (s-1)^3 s^3}\right),\\
2^{42}3^{5}5^{2}7^{2}11^{2}13^{1}17^{3}23^{2}67^{2}71^{1}\left(\frac{ 39 s^4-2600 s^3+67410 s^2-800440 s+3650496}{(s-4) (s-3)^2 (s-2)^3 (s-1)^4 s^4}\right),\label{eq:lastdetc1}
\end{gather}respectively\footnote{
  These determinants were computed using the formula of
  Corollary~\ref{cor:dzhp}.  We omit the equations obtained from
  the zonal spherical harmonic polynomials of the highest degrees
  when there are more than $\frac{\minCoz{n}}{4}+2$ equations obtained
  by this method.
  };
these determinants must vanish, as they are derived from overdetermined systems.
Since equations~\eqref{eq:firstdetc1}--\eqref{eq:lastdetc1}
have no integer roots $s$, we have reached a contradiction.
\end{proof}

\subsection{Extremal Type~II Codes of Lengths At Most $32$}
The approach used to prove Theorem~\ref{thm:48+72c}
may also be applied to show that extremal Type~II codes of lengths
$n=8,24$, and $32$ are generated by their minimal-weight codewords.
In these cases the determinants
\begin{gather*}
2^7 3^1 7^1\left(\frac{ 3 s-10}{(s-1) s}\right),\\
2^{15} 3^{2} 5^{1} 7^{1} 11^{2} 23^{1}\left(\frac{ 7 s^2-98 s+344}{(s-2) (s-1)^2 s^2}\right),\\
2^{17}3^{1}5^{2}7^{1}29^{1}31^{1}\left(\frac{ 7 s^2-126 s+584}{(s-2) (s-1)^2 s^2}\right)
\end{gather*} are obtained; none have integral roots $s$.
We therefore recover the following result.
\begin{theorem}\label{thm:8+24+32c}
If $C$ is an extremal Type~II code of length $n=8,24$, or $32$, then
$C=\gencodeshell{\minCoz{n}}{C}$.
\end{theorem}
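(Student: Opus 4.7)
The plan is to run the argument of Theorem~\ref{thm:48+72c} verbatim for each of the three shorter lengths; only the sizes of the matrices and the value of $\cutoff{n}$ will change. I would first suppose for contradiction that $C \neq \gencodeshell{\minCoz{n}}{C}$ and pick a coset $[\ccfixed] \in C/\gencodeshell{\minCoz{n}}{C}$ whose minimum-weight representative $\ccfixed$ satisfies $\wt(\ccfixed) = s > \minCoz{n}$. Self-duality of $C$ will again force $\inprodcountc{2j'+1}{\ccfixed} = 0$ for all $j'$, while Lemma~\ref{lem:toobig} will force $\inprodcountc{2j'}{\ccfixed} = 0$ whenever $j' > \minCoz{n}/4$, so the relevant unknowns reduce to $\inprodcountc{0}{\ccfixed}, \inprodcountc{2}{\ccfixed}, \ldots, \inprodcountc{\minCoz{n}/2}{\ccfixed}$, a total of $\minCoz{n}/4 + 1$ (namely two for $n = 8$ and three for $n = 24, 32$).

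Next, I would write down the count identity \eqref{eq:codesumup} together with the instances of \eqref{eq:sumdzhp=0} from Theorem~\ref{thm:extremeassmat}, applied with $\cfixed = \ccfixed$ at degrees $d = 1, 2, \ldots, \cutoff{n}$ and $d = \cutoff{n} + 2$. For each of $n \in \{8, 24, 32\}$ this produces more linear equations in the $\minCoz{n}/4 + 1$ unknowns than are needed; retaining enough of them to make a square augmented matrix of size $\minCoz{n}/4 + 2$ (discarding the equations of highest harmonic degree when there is a surplus, exactly as in the proof of Theorem~\ref{thm:48+72c}) produces a strictly overdetermined inhomogeneous linear system, whose augmented determinant must therefore vanish.

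The main obstacle --- really the only non-routine step --- is the explicit evaluation of these three augmented determinants using formula \eqref{eq:dhp_def1} for $\dzoharmd{d}{\cfixed}$, with the weight $\wt(\cfixed) = s$ kept symbolic. The matrices here are very small ($3 \times 3$ or $4 \times 4$), so the computation will be lighter than in the proof of Theorem~\ref{thm:48+72c}, and it should reproduce the three rational functions in $s$ displayed immediately before the statement. To finish, I would verify that none of the numerator polynomials $3s - 10$, $7s^2 - 98s + 344$, or $7s^2 - 126s + 584$ has an integer root: the first has root $10/3 \notin \Z$, and the two quadratics have discriminants $-28$ and $-476$ respectively, hence no real roots at all. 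Since the vanishing of the determinant would otherwise require such a root, this contradicts the assumption $s > \minCoz{n}$ and establishes the theorem.
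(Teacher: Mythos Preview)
Your proposal is correct and follows exactly the paper's own argument: rerun the proof of Theorem~\ref{thm:48+72c} for $n=8,24,32$, obtain the three displayed determinants, and check that none of the numerators $3s-10$, $7s^2-98s+344$, $7s^2-126s+584$ has an integer root. Your explicit verification via the discriminants $-28$ and $-476$ is a nice touch; note also that since you discard the highest-degree harmonic equations when there is a surplus, invoking Theorem~\ref{thm:extremeassmat} rather than just Corollary~\ref{cor:dzhp} makes no difference here---for each of these three lengths the $t$-design equations alone already give at least $\minCoz{n}/4+2$ relations.
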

Technically, Theorem~\ref{thm:8+24+32c} has been known (if only implicitly),
as the extremal Type~II codes of lengths $n=8,24$, and $32$
have been fully classified
\cite{Pless:GF2, PlessSloane1975, ConwayPless1980, ConwayPless1992}.
Our methods, however, let us prove that the extremal Type~II codes
of these lengths are generated by their minimal codewords
without appeal to the classification results
or to the explicit forms of these codes.

There is no analog of Theorems~\ref{thm:48+72c} and~\ref{thm:8+24+32c}
for extremal Type~II codes of length $n=16$.  Indeed, the extremal Type~II
code with tetrad subcode $d_{16}$ has codewords of weight~$8$
that cannot be obtained as linear combinations of codewords of weight~$4$.
As expected, following the method used to prove Theorem~\ref{thm:48+72c}
in the case $n=16$ yields the determinant
$$
-93184\left(\frac{ s-8}{(s-1) s}\right),
$$
which vanishes for $s=8$.

\subsection{Extremal Type~II Codes of Lengths $56$ and $96$}

Now, we prove an analog of Theorem~\ref{thm:48+72c}
for extremal Type~II codes of lengths $n=56,96$.  

\begin{lemma}\label{lem:irredcomps'}
If $C$ is an extremal Type~II code of length $n$,
and $w>0$ is such that $\codeshell{w}{C}\neq\emptyset$,
then for each $j$ ($1\leq j\leq n$) there exists
$c\in\codeshell{w}{C}$ such that $c_j = 1$.
\end{lemma}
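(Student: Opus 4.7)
The plan is to derive the lemma directly from the $1$-design property of $\codeshell{w}{C}$, which is a very weak consequence of the Assmus--Mattson theorem already available to us. The content of the lemma is that every coordinate position is ``hit'' by some minimum-weight codeword; for a $1$-design this is essentially automatic, because being a $1$-design forces the supports $\Sigma_c$ to cover each singleton the same positive number of times.

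First I would invoke Theorem~\ref{thm:assmat} to conclude that $\codeshell{w}{C}$ is a $\cutoff{n}$-design; since $n \equiv 0 \pmod 8$ and the definition \eqref{eq:defcut} gives $\cutoff{n} \in \{1,3,5\}$, we have $\cutoff{n} \geq 1$, so in particular $\codeshell{w}{C}$ is a $1$-design in the sense of Definition~\ref{def:design}. Thus there is an integer $N_1(\codeshell{w}{C})$ such that every singleton $\{j\} \subset \{1,\ldots,n\}$ is contained in exactly $N_1(\codeshell{w}{C})$ of the supports $\Sigma_c$ with $c \in \codeshell{w}{C}$.

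Next, applying the double-counting identity \eqref{eq:doublecount} with $t=1$ and $\somedesign = \codeshell{w}{C}$ yields
\begin{equation*}
n \, N_1(\codeshell{w}{C}) = w \, |\codeshell{w}{C}|.
\end{equation*}
By the hypotheses $w > 0$ and $\codeshell{w}{C} \neq \emptyset$, the right-hand side is strictly positive, so $N_1(\codeshell{w}{C}) \geq 1$. Hence for each coordinate $j$ there is at least one $c \in \codeshell{w}{C}$ with $j \in \Sigma_c$, i.e., $c_j = 1$, as required.

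There is essentially no obstacle here; the only subtle point is confirming that the length hypothesis guarantees $\cutoff{n} \geq 1$ so that the $1$-design property is available in the first place, which is immediate from \eqref{eq:defcut}. The lemma is really a statement that minimum-weight codewords of extremal Type~II codes support no ``dead'' coordinate, and this is a strictly weaker consequence of Assmus--Mattson than what was used in Theorems~\ref{thm:48+72c} and~\ref{thm:8+24+32c}.
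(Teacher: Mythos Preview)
Your proof is correct and takes essentially the same approach as the paper: both derive the $1$-design property of $\codeshell{w}{C}$ from Assmus--Mattson and then use a simple count to show each coordinate is covered. The only cosmetic difference is that the paper phrases the count via Corollary~\ref{cor:equivprop2} applied to the coordinate function $c \mapsto c_j$, whereas you use the double-counting identity~\eqref{eq:doublecount} directly; these are equivalent.
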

\begin{proof}
By Theorem~\ref{thm:extremeassmat}, $\codeshell{w}{C}$ is a $1$-design.
We then have from Corollary~\ref{cor:equivprop2} that
$$
\sum_{c\in \codeshell{w}{C}}c_j
= \frac{|\codeshell{w}{C}|}{\vert\hammingsphere_w\vert}
  \sum_{v\in \hammingsphere_w}v_j>0.
$$
The result follows immediately.
\end{proof}

We now state and prove the configuration result for
extremal Type~II codes of lengths~$56$ and~$96$.

\begin{theorem}\label{thm:56+96c}
If $C$ is an extremal Type~II code of length $n=56,96$, then
$$
C=\gencodeshell{\minCoz{n}}{C}.
$$
\end{theorem}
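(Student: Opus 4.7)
The plan is to imitate the determinant argument of Theorem~\ref{thm:48+72c}, tuned to the parameters $\cutoff{56}=3$, $\minCoz{56}=12$ and $\cutoff{96}=5$, $\minCoz{96}=20$. Assume for contradiction that some class $[\ccfixed] \in C/\gencodeshell{\minCoz{n}}{C}$ admits a minimal-weight representative $\ccfixed$ with $s \defeq \wt(\ccfixed) > \minCoz{n}$. Self-duality of $C$ forces $\inprodcountc{2j'+1}{\ccfixed}=0$, and Lemma~\ref{lem:toobig} forces $\inprodcountc{2j'}{\ccfixed}=0$ for $j' > \minCoz{n}/4$. The surviving unknowns $\inprodcountc{0}{\ccfixed}, \inprodcountc{2}{\ccfixed}, \ldots, \inprodcountc{\minCoz{n}/2}{\ccfixed}$ number $\minCoz{n}/4 + 1$: four for $n=56$ and six for $n=96$.

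To assemble the linear system, apply Corollary~\ref{cor:dzhp} at $\cfixed = \ccfixed$ together with the extra degree-$(\cutoff{n}+2)$ vanishing supplied by the $\cutoffonly\frac12$-design property in Theorem~\ref{thm:extremeassmat}, for degrees $d \in \{1, 2, \ldots, \cutoff{n}\} \cup \{\cutoff{n}+2\}$. By~\eqref{eq:dhp_def1}, each $\dzoharmd{d}{\ccfixed}(c)$ depends on $c \in \codeshell{\minCoz{n}}{C}$ only through $\wt(c)=\minCoz{n}$ and $\wt(\isectcode{c}{\ccfixed})$, so the result is $\cutoff{n}+1$ homogeneous linear equations in the unknowns. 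Combining with the sum relation~\eqref{eq:codesumup} yields $\cutoff{n}+2$ equations in $\minCoz{n}/4+1$ unknowns---five in four for $n=56$, seven in six for $n=96$---so the system is overdetermined by one, forcing the extended determinant to vanish.

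This extended determinant is a rational function of $s$ whose numerator is an integer-coefficient polynomial, and the proof is complete once one verifies that this polynomial has no integer root $s$ in the admissible range $\minCoz{n} < s$ with $s \equiv 0 \pmod 4$. The main obstacle will be the arithmetic: the two determinants are $5 \times 5$ and $7 \times 7$ with entries themselves polynomial in $s$, so the computation is sizable and the numerator could in principle acquire an unwanted integer root (as happens for $n=16$). Should a suspicious root appear, I would invoke Lemma~\ref{lem:irredcomps'}: since every coordinate in the support of $\ccfixed$ is covered by some minimal-weight codeword meeting $\ccfixed$ nontrivially, at least one $\inprodcountc{2j'}{\ccfixed}$ with $j' \geq 1$ must be strictly positive, which rules out any suspect $s$ whose (unique up to scale) solution of the overdetermined system violates that positivity.
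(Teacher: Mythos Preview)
Your setup is correct, but the key arithmetic step fails: the extended determinants \emph{do} have integer roots in the admissible range.  For $n=56$ the numerator factors as $(s-16)\bigl(3s^3-112s^2+1368s-5120\bigr)$, and for $n=96$ it carries a factor $(s-24)$.  Since $\minCoz{56}=12$ and $\minCoz{96}=20$, the values $s=16$ and $s=24$ lie strictly above $\minCoz{n}$ and are divisible by~$4$, so you cannot dismiss them.  Your fallback via Lemma~\ref{lem:irredcomps'} only excludes an $s$ whose unique solution forces $\inprodcountc{2}{\ccfixed}=\cdots=\inprodcountc{\minCoz{n}/2}{\ccfixed}=0$; there is no reason the solution at $s=16$ or $s=24$ should behave this way, and indeed any genuine weight-$16$ (resp.\ weight-$24$) codeword of~$C$ already furnishes nonnegative integers satisfying both the linear system and your positivity constraint.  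So the fallback gives no contradiction.

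The paper repairs this not by sharpening the inequalities but by changing the ambient quotient.  Instead of taking $\ccfixed\in C$ with $s>\minCoz{n}$, it takes $\ccfixed$ to be a minimal-weight representative of a nontrivial class of $\bigl(\gencodeshell{\minCoz{n}}{C}^{\dualcode}\bigr)\big/\gencodeshell{\minCoz{n}}{C}$, with $s>0$.  The same determinant must vanish, and now the useful fact is merely that every integer root is $\equiv 0\pmod 4$.  Since this holds for \emph{every} such class, and since elements of $\gencodeshell{\minCoz{n}}{C}$ are already doubly even while $\ccfixed\perp\gencodeshell{\minCoz{n}}{C}$ forces $\wt(\ccfixed+g)\equiv\wt(\ccfixed)+\wt(g)\pmod 4$, one concludes that $\gencodeshell{\minCoz{n}}{C}^{\dualcode}$ is doubly even, hence self-orthogonal, hence of dimension at most $n/2$.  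That forces $\dim\gencodeshell{\minCoz{n}}{C}\geq n/2$ and therefore $\gencodeshell{\minCoz{n}}{C}=C$.  The roots $16$ and $24$ are not obstacles in this argument---their divisibility by~$4$ is precisely what is exploited.
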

\begin{proof}
Seeking a contradiction, we suppose that $\gencodeshell{\minCoz{n}}{C}\neq C$,
and consider $\gencodeshell{\minCoz{n}}{C}^\dualcode$.
We must have
$\gencodeshell{\minCoz{n}}{C}^\dualcode\neq  \gencodeshell{\minCoz{n}}{C}$,
since otherwise we would have $\gencodeshell{\minCoz{n}}{C}= C$\/
by Lemma~\ref{lem:irredcomps'}.  Thus, there is some equivalence class
$[\ccfixed]\in
  (\gencodeshell{\minCoz{n}}{C}^\dualcode)/(\gencodeshell{\minCoz{n}}{C})$
with minimal-weight representative $\ccfixed$ of weight $\wt(\ccfixed)=s>0$.

Corollary~\ref{cor:dzhp} yields $\cutoff{n}+1$ equations
in the variables $\inprodcountcdual{2j'}{\ccfixed}$
($0\leq j'\leq \inlinefrac{\minCoz{n}}{4}$).\footnote{
  Note that the variables $\inprodcountcdual{j}{\ccfixed}$ vanish for
  $j$ not of the form $2j'$ with $0\leq j'\leq \inlinefrac{\minCoz{n}}{4}$,
  as the conclusion of Lemma~\ref{lem:toobig} holds for $[\ccfixed]\in
  (\gencodeshell{\minCoz{n}}{C}^\dualcode)/(\gencodeshell{\minCoz{n}}{C})$.
  }
Combining these equations with \eqref{eq:codesumup},
we obtain a system of $\cutoff{n}+2$ equations in the
$$
\frac{\minCoz{n}}{2}+1< \cutoff{n}+2
$$
variables $\inprodcountcdual{2j'}{\ccfixed}$
($0\leq j'\leq \inlinefrac{\minCoz{n}}{4}$).
For $n=56, 96$, these inhomogeneous systems have
(extended) matrices with determinants
\begin{gather}
-2^{27}3^{7}5^{3}7^{3}11^{1}13^{2}17^{1}53^{1}\left(\frac{(s-16) \left(3 s^3-112 s^2+1368 s-5120\right)}{(s-4) (s-3) (s-2)^2 (s-1)^3 s^3}\right),\label{eq:56detc}\\
-2^{59}3^{9}5^{4}7^{2}11^{2}13^{2}17^{1}19^{1}23^{3}29^{1}31^{2}43^{1}47^{2}89^{2}\cdot S_{96}(s),\label{eq:96detc} 
\end{gather}
where $S_{96}$ is the rational function
$$S_{96}(s)=\left(\frac{(s-24) \left(68 s^5-6936 s^4+289901 s^3-6153306 s^2+65640728
   s-277774080\right)}{(s-6) (s-5) (s-4)^2 (s-3)^3 (s-2)^4 (s-1)^5 s^5}\right).
$$
These determinants must vanish, but the only integral roots of
\eqref{eq:56detc} and \eqref{eq:96detc} are multiples of~$4$.
Therefore, $\gencodeshell{\minCoz{n}}{C}^\dualcode$ is doubly even, and
it follows that $\gencodeshell{\minCoz{n}}{C}^\dualcode$ is self-orthogonal.
Then, $\dim(\gencodeshell{\minCoz{n}}{C}^\dualcode)\leq \inlinefrac{n}{2}$
and so $\dim(\gencodeshell{\minCoz{n}}{C})\geq \inlinefrac{n}{2}$.
We must therefore have $\gencodeshell{\minCoz{n}}{C}=C$.
\end{proof}

\begin{remarks}
After the our results were first circulated in
\cite{Kominers:thesis} and \cite{Elkies+Kominers:Weighted},
Harada~\cite{Harada:5-design} showed the following result that
generalizes our configuration results for extremal Type~II codes
of lengths $24$, $48$, $72$, and $96$.
\begin{theoremnonum}[{\cite{Harada:5-design}}]\label{thm:harada}
If $C$ is an extremal Type~II code of length $24m$ for a positive integer
$m \leq 6$, and $w = 4k$ for some integer $k$ with $m < k < 5m$
and $(m,k) \neq (6,18)$, then $C = \gencodeshell{w}{C}$.
\end{theoremnonum}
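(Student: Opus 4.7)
The plan is to adapt the coset–determinant technique of Theorems~\ref{thm:48+72c} and~\ref{thm:56+96c} from the minimum-weight shell to an arbitrary weight shell $\codeshell{w}{C}$. Write $n = 24m$, so that $\cutoff{n} = 5$ and every $\codeshell{w'}{C}$ is a $5\frac{1}{2}$-design by Theorem~\ref{thm:extremeassmat}. Put $C' \defeq \gencodeshell{w}{C}$ and assume for contradiction that $C' \subsetneq C$. Since $C = C^\dualcode \subseteq (C')^\dualcode$, choose a nontrivial class in $(C')^\dualcode/C'$ and pick a minimal-weight representative $\ccfixed$ of weight $s > 0$; an argument parallel to Lemma~\ref{lem:irredcomps'} (using that $\codeshell{w}{C}$ is a $1$-design and nonempty in the stated range of $w$) excludes the degenerate possibility $(C')^\dualcode = C'$. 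Minimality of $\ccfixed$ within $[\ccfixed]$ forces $\wt(\isectcode{c}{\ccfixed}) \leq w/2$ for each $c \in \codeshell{w}{C}$ (Lemma~\ref{lem:toobig} adapted), while $\ccfixed \in (C')^\dualcode$ forces $\wt(\isectcode{c}{\ccfixed})$ to be even. Hence the nonzero counts are $\inprodcountcdual{2j'}{\ccfixed}$ for $0 \leq j' \leq k$, giving exactly $k+1$ unknowns.

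Apply Corollary~\ref{cor:dzhp} with $\cfixed = \ccfixed$ at every degree $d$ with $1 \leq d \leq 5$, plus the extra relation at $d = 7$ from the $5\frac{1}{2}$-design property. The resulting six homogeneous linear relations have coefficients that are explicit rational functions of $s$, obtained from~\eqref{eq:dhp_def1} by substituting $\wt(\cfixed) = s$ and $\wt(v) = w$ and evaluating at $\wt(\isectcode{v}{\cfixed}) = 2j'$. Adjoining the inhomogeneous total-count equation $\sum_{j'}\inprodcountcdual{2j'}{\ccfixed} = |\codeshell{w}{C}|$, with $|\codeshell{w}{C}|$ fixed by the (extremality-prescribed) weight enumerator of $C$, one gets seven equations in $k+1$ unknowns. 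Exactly as in the footnote to~\eqref{eq:firstdetc1}--\eqref{eq:lastdetc1}, when $k \leq 5$ one keeps the $k+2$ lowest-degree equations, forms the square $(k+2) \times (k+2)$ extended matrix, and computes its determinant $R_{m,k}(s)$ as a rational function in $s$. Factoring should show that every integer root of $R_{m,k}$ is a multiple of~$4$; as at the end of the proof of Theorem~\ref{thm:56+96c}, this forces $(C')^\dualcode$ to be doubly even, hence self-orthogonal, and the dimension inequality $\dim C' \geq n/2$ then collapses to $C' = C$, contradicting $C' \subsetneq C$.

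The main obstacle is that only seven usable linear relations are available (six zonal-harmonic equations plus one total count), so the direct overdetermined-system argument extends only to $k \leq 5$, whereas the theorem asserts the much longer range $m < k < 5m$. For $k \geq 6$ one must import additional constraints. A natural source is to run the same analysis in parallel at an auxiliary weight $w^\ast = 4k^\ast$: the same $\ccfixed$ yields counts $\inprodcountcdual{2j'}{\ccfixed;w^\ast}$ satisfying their own seven-equation system, and these families are coupled to the original one by the weight enumerator of $\ccfixed^\dualcode \cap C$ via MacWilliams, together with the complement-interchange identity $\wt(\isectcode{(c + \mathbf{1})}{\ccfixed}) = s - \wt(\isectcode{c}{\ccfixed})$ when $\mathbf{1} \in C$. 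Packaging all of these relations into one large rectangular system and verifying that its only integer consistency roots are multiples of $4$, uniformly across admissible pairs $(m,k)$ with $m \leq 6$, is the technical heart of the argument. The exclusion $(m,k) = (6,18)$ corresponds to $w = 72 = n/2$, the one weight at which the involution $c \mapsto c + \mathbf{1}$ acts freely on $\codeshell{w}{C}$ and halves the effective rank of the zonal-harmonic system, where the determinant argument genuinely fails.
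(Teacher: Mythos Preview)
The paper does not give a proof of this theorem. It is stated in the closing \emph{Remarks} as a result of Harada, with the explicit comment that ``Harada's approach is different from ours, as it uses Mendelsohn's relations.'' So there is no ``paper's own proof'' to compare against---only the observation that Harada's argument is \emph{not} the zonal-harmonic/determinant technique of Theorems~\ref{thm:48+72c}--\ref{thm:56+96c}.

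Your proposal is an attempt to push the paper's technique through to Harada's full statement, and you have correctly located the obstruction yourself: the $5\tfrac12$-design property supplies only six zonal relations (degrees $1,\ldots,5$ and $7$), so with the total-count equation you have at most seven linear constraints on the $k+1$ unknowns $N_{2j'}(C;\ccfixed)$. The overdetermined-system argument therefore works only for $k\le 5$, whereas the theorem requires $m<k<5m$ with $m$ up to $6$, i.e.\ $k$ as large as~$29$. The auxiliary-weight/MacWilliams coupling you sketch for $k\ge 6$ is not actually carried out: you do not establish that the added relations are independent of the ones already present, nor that the enlarged system has integer consistency roots only at multiples of~$4$. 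As written this is a program, not a proof, and the gap for $k\ge 6$ is genuine. Your remark on the excluded case $(m,k)=(6,18)$ is likewise speculative; you have not shown that any determinant actually degenerates there, only that $w=n/2$ is the self-complementary weight.
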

\noindent
Taking $k=m+1$ for $m=1,2,3,4$ recovers our results for $n=24m$.
Harada's approach is different from ours, as it uses 
Mendelsohn's relations \cite{Mendelsohn71}.
\end{remarks}

\bibliographystyle{amsalpha}
\bibliography{references}
\end{document}